\definecolor{webgreen}{rgb}{0,.5,0}
\definecolor{webbrown}{rgb}{.6,0,0}
\newtheorem{Theorem}{Theorem}[section]
\newtheorem{Proposition}[Theorem]{Proposition}
\theoremstyle{definition}
\newtheorem{Remark}[Theorem]{Remark}
\newtheorem{Example}[Theorem]{Example}
\def\QQ{{\mathbb Q}}
\def\ZZ{{\mathbb Z}}
\def\RR{{\mathbb R}}
\begin{document}

\begin{center}
\vskip 1cm{\Large\bf The $h^{*}$-polynomial of the cut polytope of $K_{2,m}$ in the lattice spanned by its vertices}
\vskip 1cm
\large
Ryuichi Sakamoto\\
Department of Mathematical Sciences\\
Graduate School of Science and Technology\\
Kwansei Gakuin University\\
Sanda, Hyogo 669-1337, Japan\\
\href{mailto:dpm86391@kwansei.ac.jp}{\tt dpm86391@kwansei.ac.jp}
\end{center}

\begin{abstract}
The cut polytope of a graph is an important object in several fields, such as functional analysis, combinatorial optimization, and probability.
For example, Sturmfels and Sullivant showed that the toric ideals of cut polytopes are useful in algebraic statistics. 
In the theory of lattice polytopes, the $h^{*}$-polynomial is an important invariant.
However, except for trees, there are no classes of graphs for which the $h^{*}$-polynomial of their cut polytope is explicitly specified. 
In the present paper, we determine the $h^{*}$-polynomial of the cut polytope of complete bipartite graph $K_{2,m}$ using the theory of Gr\"{o}bner bases of toric ideals.
\end{abstract}

\section{Introduction}
Given integer vectors $\bm{v}_1, \ldots, \bm{v}_l \in \ZZ^{d}$,  
let $${\rm conv}( \bm{v}_1, \ldots, \bm{v}_l )=\left\{\sum_{i=1}^{l}r_i\bm{v}_{i} \ \left|  \ 0\leq{r_i}\in \RR, \sum_{i=1}^{l}r_i=1 \right.\right\}.$$
A set $\mathcal{P}\subset \RR^d$ is called a {\em lattice polytope}
if there exist $\bm{v}_1, \ldots, \bm{v}_l \in \ZZ^{d}$ such that
$\mathcal{P} = {\rm conv}( \bm{v}_1, \ldots, \bm{v}_l )$.
Let $\mathcal{P}\subset \RR^d$ be a lattice polytope of dimension $d$, where 
$\mathcal{P}\cap \ZZ^{d}=\{\bm{a}_1, \ldots, \bm{a}_n\}$.
Let $\mathcal{A_{\mathcal{P}}}$ be the integer matrix 
$$\mathcal{A_{\mathcal{P}}}=
\begin{pmatrix}
\bm{a}_1 & \cdots & \bm{a}_{n}\\
1 & \cdots & 1
\end{pmatrix}.
$$
The {\em normalized Ehrhart polynomial} $i(\mathcal{P},m)$ is given by the following equation:
$$i(\mathcal{P},m)=|m\mathcal{P}' \cap \ZZ \mathcal{A_{\mathcal{P}}}|,$$
where $m\in \mathbb{N}$, 
$\mathcal{P}'={\rm conv}(\binom{\bm{a}_1}{1}, \ldots, \binom{\bm{a}_n}{1})$, 
$m \mathcal{P}' = \{m\bm{a} \ | \ \bm{a}\in \mathcal{P}'\}$, and 
$$\ZZ \mathcal{A_{\mathcal{P}}} = \ZZ \begin{pmatrix}\bm{a}_1\\1\end{pmatrix}+\cdots +\ZZ \begin{pmatrix}\bm{a}_n\\1\end{pmatrix}. $$
In general, $i(\mathcal{P},m)$ satisfies the following fundamental properties \cite{Ehr}:
\begin{itemize}
\item
$i(\mathcal{P},m)$ is a polynomial of degree $d$ in $m$;
\item
$i(\mathcal{P},0)=1$.
\end{itemize}
The {\em $h^{*}$-polynomial} $h^{*}(\mathcal{P},x)$ of $\mathcal{P}$ in the lattice $\ZZ \mathcal{A_{\mathcal{P}}}$ is defined by 
$$1+\sum_{m=1}^{\infty}i(\mathcal{P}, m)x^m=\frac{h^{*}(\mathcal{P},x)}{(1-x)^{d+1}}.$$ 
In general, $h^{*}(\mathcal{P},x)$ satisfies the following properties:
\begin{itemize}
\item
$h^{*}(\mathcal{P},x)=\sum_{i=0}^{d}h_{i}^{*}x^i$,
where each $h_{i}^{*}$ is a nonnegative integer \cite{Stanley};
\item
$i(\mathcal{P},m)=\sum_{i=0}^{d}h_{i}^{*}\binom{m+d-i}{d}$;
\item
If $h_{d}^{*}>0$, then we have $h_{i}^{*}\ge h_{1}^{*}$ $(1\leq{i}\leq{d-1})$ \cite[Theorem~1.1]{Hibi}.
\end{itemize}
The third property is {\em Hibi's lower bound theorem}.
Since the $h^{*}$-polynomial is defined in the lattice $\ZZ \mathcal{A_{\mathcal{P}}}$, ${\mathcal P}$ is a {\em spanning lattice polytope} \cite{spanning}, and a generalization of Hibi's lower bound theorem is known:
\begin{itemize}
\item
$h_{i}^{*}\ge h_{1}^{*}$ $(1 \leq{i} \leq \deg ( h^{*}(\mathcal{P},x)) -1)$ \cite[Corollary~1.6]{HKN}.
\end{itemize}
A polynomial $f(x)$ of degree $s$ is said to be {\it palindromic} if $f(x)=x^sf(x^{-1})$.
Let $K[\mathcal{A_{\mathcal{P}}}]$ be the toric ring of $\mathcal{P}$.
(The toric ring will be defined in Section $1$.)
If $K[\mathcal{A_{\mathcal{P}}}]$ is normal 
(i.e., $\ZZ_{\ge 0}\mathcal{A_{\mathcal{P}}}=\ZZ \mathcal{A_{\mathcal{P}}}\cap \QQ_{\ge 0}\mathcal{A_{\mathcal{P}}}$, see \cite[Proposition~13.5]{Sturm}) and Gorenstein, then it is known by \cite[Lemma $4.22 \ (b)$]{HHO} and \cite[P.235]{Bruns} that the $h^{*}$-polynomial of ${\mathcal P}$ is
palindromic.
In the theory of lattice polytopes, $h^{*}$-polynomials
are important objects to study.
For example, the $h^{*}$-polynomials of stable set polytopes of graphs, order polytopes, and chain polytopes of posets were studied in \cite{Atha, Stanley2}.
Let $G$ be a finite connected simple graph with vertex set $V(G)=\{1,2,\ldots ,m\}$ and edge set $E(G)=\{e_{1},e_{2},\ldots ,e_{r}\}$.
For two subsets $A$ and $B$ of $V(G)$ such that $A\cap B=\emptyset, A\cup B=V(G)$,
we define a vector $\delta_{A|B}=(d_{1},d_{2},\ldots ,d_{r})\in \{0,1\}^r$ by
\begin{equation*}
d_{i}=\begin{cases}
								1 & \text{$|A\cap e_{i}|=1$,}\\
								0 & \text{otherwise.}
							\end{cases}
\end{equation*}
Note that $\delta_{A|B}=\delta_{B|A}$.
The {\em cut polytope} of $G$ is the $0/1$ polytope
$${\rm Cut}(G)={\rm conv}(\delta_{A|B} \hspace{2truept} |A,B\subset V(G), A\cap B=\emptyset, A\cup B=V(G) ).$$
\begin{Example}
Let $G$ be a cycle of length $4$, 
where $V(G)=\{1,2,3,4\}$ and $E(G)=\{e_1=\{1,2\}, e_2=\{2,3\}, e_3=\{3,4\}, e_4=\{1,4\}\}$. 
For subset $A=\{1,2\}\subset V(G)$, we compute $\delta_{A|B}=(d_1,d_2,d_3,d_4)$, where $B=\{3,4\}$. Since $|A\cap e_1|=2$, we have $d_1=0$. Similarly, we obtain $d_2=1, d_3=0$, and $d_4=1$. Hence, $\delta_{A|B}=(0,1,0,1)$. By computing $\delta_{A|B}$ for all subsets $A\subset V(G)$, we obtain the cut polytope ${\rm Cut}(G)={\rm conv}((0,0,0,0), (1,1,0,0), (1,0,1,0), (1,0,0,1), $\\$(0,1,1,0), (0,1,0,1),(0,0,1,1), (1,1,1,1)).$   
\end{Example}

We define the graph theoretical terminology used in the present paper. A {\it bridge} of a graph $G$ is an edge of $G$
whose deletion increases the number of connected components, and a graph $G$ is said to be {\it bridgeless} if $G$ has no bridges. An {\it induced cycle} of $G$ is a cycle 
of $G$ that is an induced subgraph of $G$. 
A graph $G$ is said to be {\it chordal} if $G$ has no induced cycles of length $\ge 4$. 
A graph $H$ is called a {\it minor} of a graph $G$ if $H$ is obtained from $G$ by a sequence of contractions and deletions of edges. 
On the other hand, if we cannot obtain $H$ as a minor of $G$, then
$G$ is said to be {\it $H$-minor free}.
A complete graph with $n$ vertices is denoted by $K_n$, a complete bipartite graph with $m+n$ vertices is denoted by $K_{m,n}$, and a cycle of length $n$ is denoted by $C_n$.
There is only one class of cut polytopes for which the $h^{*}$-polynomial is explicitly known.
Nagel and Petrovi\'{c}
 \cite{Petro} showed that,
if $G$ is a tree with $n\geq 1$ edges,
then the $h^{*}$-polynomial of the cut polytope in the lattice $\ZZ {\mathcal A}_{{\rm Cut}(G)}$ 
 is the Eulerian polynomial $$A_{n}(x):={\displaystyle \sum_{w\in\mathfrak{S}_n}x^{{\rm des}(w)}}$$ of degree $n-1$.
Here,  $\mathfrak{S}_n$ is a symmetric group and ${\rm des}(w)=|\{i \ | \ w_i>w_{i+1}\}|$ for $w=w_1w_2\cdots w_n\in \mathfrak{S}_n$. It is known that $A_{n}(x)$ is palindromic and unimodal.
Ohsugi \cite{OHSUGI} showed that
the toric ring of the cut polytope ${\rm Cut}(G)$ of a graph $G$ is normal and Gorenstein
if and only if $G$ is $K_5$-minor free and satisfies one of the following:
\begin{enumerate}
\item
$G$ is a bipartite graph with no induced cycle of length $\ge 6$.
\item
$G$ is a bridgeless chordal graph.
\end{enumerate}  
Thus, if $G$ satisfies one of the above conditions, then the $h^{*}$-polynomial of the cut polytope of $G$
is palindromic, since the toric ring is normal and Gorenstein.

In the present paper,
we determine the $h^{*}$-polynomial of the cut polytope of a complete bipartite graph $K_{2,n-2}$ and show that the $h^{*}$-polynomial is $(x+1) (A_{n-2}(x))^2$
using the theory of Gr\"{o}bner bases of toric ideals. See \cite{HHO, Sturm} for the details of Gr\"{o}bner bases and toric ideals. 
\begin{Remark}
We discuss the normalized Ehrhart polynomial $$i(\mathcal{P},m)=|m\mathcal{P}' \cap \ZZ\mathcal{A_{\mathcal{P}}}|$$
instead of the ordinary {\it Ehrhart polynomial} $$|m\mathcal{P}\cap \ZZ^d|$$
because the lattice spanned by $\delta_{A|B}$'s is important in the study of the cut polytopes. 
In fact, the characterization of the graph $G$ satisfying $\ZZ_{\ge 0}B_G=\ZZ B_G\cap \QQ_{\ge 0}B_G$, where $B_G={\rm Cut}(G)\cap \ZZ^d$, is an important open problem. See \cite{LTT} and the references therein.
In addition, it has been conjectured that $\ZZ_{\ge 0}\mathcal{A}_{{\rm Cut}(G)}=\ZZ \mathcal{A}_{{\rm Cut}(G)}\cap \QQ_{\ge 0}\mathcal{A}_{{\rm Cut}(G)}$ if and only if $G$ is $K_5$-minor free \cite{SturmfelsSullivant}. See \cite{Ohsugi1, SturmfelsSullivant}. 
\end{Remark}

\section{Standard monomials of cut ideals of $K_{2,n-2}$}
Let $K[\bm{x}]=K[x_1, \ldots ,x_n]$ be a polynomial ring in $n$ variables over a field $K$.
Let $\mathcal{M}_n$ be the set of all monomials of $K[\bm{x}]$. A total order $<$ on $\mathcal{M}_n$ is called a {\it monomial order} if $<$ satisfies the following properties:
\begin{itemize}
\item
For all $1\neq u\in \mathcal{M}_n$, $1<u.$
\item
If $u<v$ ($u,v\in \mathcal{M}_n$), then $w\cdot u<w\cdot v$ for all $w\in\mathcal{M}_n$.
\end{itemize}
We fix a monomial order $<$. 
For a nonzero polynomial $f$ which belongs to $K[\bm{x}]$, the {\em initial monomial} in$_{<}(f)$ of $f$ is the greatest monomial in $f$ with respect to $<$. The {\it initial ideal} of an ideal $I \subset K[\bm{x}]$ with respect to $<$ is defined by in$_{<}(I)=\langle$ in$_{<}(f) \ | \ 0\neq f\in I \rangle$. 
A finite subset $G=\{g_1, \ldots, g_s\} \subset I$ is called a {\it Gr\"{o}bner basis} of $I$ with respect to $<$ when in$_{<}(I)=\langle$in$_{<}(g_1), \ldots ,$in$_{<}(g_s) \rangle$. 
See \cite[Chapter 1]{HHO} for the basic theory of Gr\"{o}bner bases.

A $d \times n$ integer matrix $A=(\bm{a}_{1},\bm{a}_{2},\ldots ,\bm{a}_{n})$ is called a {\em configuration} if there exists a vector $\bm{c}\in \mathbb{R}^d$ such that for all $1\leq{i}\leq{n}$, the inner product $\bm{a}_{i} \cdot \bm{c}$ is equal to $1$.
Let $K[t_1^{\pm 1},t_2^{\pm 1},\ldots,t_d^{\pm 1}]$ be a Laurent polynomial ring over $K$. 
For an integer vector $\bm{b}=(b_{1},b_{2},\ldots ,b_{d})\in \mathbb{Z}^d$,
we define the Laurent monomial $\bm{t^{b}}=t_{1}^{b_{1}}t_{2}^{b_{2}}\cdots t_{d}^{b_{d}} \in K[t_1^{\pm 1},t_2^{\pm 1},\ldots,t_d^{\pm 1}]$ and the {\it toric ring} 
$K[A]=K[\bm{t}^{\bm{a}_{1}},\bm{t}^{\bm{a}_{2}},\ldots ,\bm{t}^{\bm{a}_{n}}]$. Let $\pi$ be a homomorphism 
$\pi:K[\bm{x}]\rightarrow K[A]$, where $\pi (x_{i})=\bm{t}^{\bm{a}_{i}}$. The kernel of $\pi$ is called the {\em toric ideal} of $A$ and denoted by $I_{A}$. We often regard $A=(\bm{a}_1,\ldots, \bm{a}_n)$ as a set $A=\{\bm{a}_1,\ldots,\bm{a}_n\}$. 
Suppose that a set $\Delta$ consists of simplices and that each vertex of $\sigma \in \Delta$ belongs to $A$. Then, $\Delta$ is called a {\it covering} of ${\rm conv}(A)$ if $${\rm conv}(A)={\displaystyle \bigcup_{F\in \Delta}F}.$$
 We say that a covering $\Delta$ is a {\it triangulation} of ${\rm conv}(A)$ if 
$\Delta$ is a simplicial complex. A triangulation $\Delta$ of a polytope ${\rm conv}(A)$ is {\it unimodular} if the normalized volume of 
any maximal simplex is equal to $1$.  For a configuration $A$, the {\it initial complex} with respect to $<$ is defined by
$$\Delta ({\rm in}_{<}(I_A)):=\left\{{\rm conv}(B) \ \left| \ B\subset \{\bm{a}_{1},\ldots ,\bm{a}_{n}\}, \prod_{\bm{a}_{i}\in B}x_i\notin \sqrt{{\rm in}_{<}(I_A)}\right.\right\}.$$
It is known that $\Delta ({\rm in}_{<}(I_A))$ is a triangulation of conv$(A)$.
Moreover,  $\Delta ({\rm in}_{<}(I_A))$ is unimodular if and only if  ${\rm in}_{<}(I_A)$ is generated by squarefree monomials.
\begin{Example}
Let $A$ be a configuration
\begin{equation*}
A=(\bm{a}_1 , \bm{a}_2 , \bm{a}_3 , \bm{a}_4 , \bm{a}_5)=
\begin{pmatrix}
0 && 1 && 1 && 0 && 1\\
0 && 1 && 0 && 1 && 1\\
0 && 0 && 1 && 1 && 1\\
1 && 1 && 1 && 1 && 1
\end{pmatrix}.
\end{equation*}
The toric ideal of $A$ is given by $I_A=\langle x_1x_{5}^2-x_2x_3x_4 \rangle$. Let $<$ be the lexicographic order 
on $K[x_1,x_2,x_3,x_4,x_5]$ induced by the ordering $x_1>x_2>x_3>x_4>x_5$. Then, ${\rm in}_<(x_1x_{5}^2-x_2x_3x_4)=x_1x_{5}^2 $, ${\rm in}_<(I_A)=\langle x_1x_{5}^2 \rangle$ and $\sqrt{{\rm in}_<(I_A)}=\langle x_1x_5 \rangle$. 
Thus, the maximal simplices of $\Delta ({\rm in}_{<}(I_A))$ are
$$
\sigma_1 = {\rm conv}(\bm{a}_1, \bm{a}_2, \bm{a}_3, \bm{a}_4)
\mbox{ and }
\sigma_2 = {\rm conv}(\bm{a}_2, \bm{a}_3, \bm{a}_4, \bm{a}_5).
$$
The triangulation $\Delta ({\rm in}_{<}(I_A))$ is not unimodular,
since the normalized volume of $\sigma_1$ is 2. 
\end{Example}
A monomial is said to be a {\it standard monomial} of a toric ideal $I_A$ with respect to a monomial order $<$ if the monomial 
does not belong to in$_{<}(I_A)$.
If  $\Delta ({\rm in}_{<}(I_A))$ is unimodular, then
the number of squarefree standard monomials of degree $i$ corresponds to the number of $(i-1)$-dimensional faces of  $\Delta ({\rm in}_{<}(I_A))$. 
See \cite{HHO, Sturm} for details.

Let $G$ be a graph with $m$ vertices.
We consider the configuration
\begin{equation*}
\mathcal{A}_{{\rm Cut}(G)}=
\begin{pmatrix}
\delta_{A_{1}|B_1} && \delta_{A_{2}|B_2} && \cdots && \delta_{A_{N}|B_N}\\
\\
1 && 1 && \cdots && 1
\end{pmatrix},
\end{equation*}
where $A_i\cap B_i =\emptyset, A_i\cup B_i =V(G)$ for $1\leq{i}\leq{N}$ and $N=2^{m-1}$.
The toric ideal of $\mathcal{A}_{{\rm Cut}(G)}$ is called the {\em cut ideal} of $G$ and is denoted by $I_G$.
The notion of cut ideals was introduced in \cite{SturmfelsSullivant}.
The toric ring and ideal of $\mathcal{A}_{{\rm Cut}(G)}$ were investigated in, e.g., \cite{Eng, Petro, KNP, Ohsugi1, Saka, Shibata}.
The cut ideal $I_{G}$ of a graph $G$ is generated by quadratic binomials if and only if $G$ is $K_4$-minor free \cite{Eng}.
The cut ideal $I_{G}$ of a graph $G$
has a quadratic Gr\"{o}bner basis if $G$ satisfies one of the following:
\begin{itemize}
\item
$G$ is $(K_4, C_5)$-minor free,
where $K_4$ is a complete graph with $4$ vertices,
and $C_5$ is a cycle of length $5$ \cite[Corollary~2.4]{Shibata};
\item
$G$ is a cycle of length $\le 7$ \cite[Theorem~2.3]{Saka}.
\end{itemize}
An {\em unordered partition} $A|B$ of the vertex set $V(G)$
consists of subsets $A,B \subset V(G)$ such that
 $A \cap B =\emptyset , A \cup B = V(G)$.
Given an unordered partition $A|B$, we associate a variable
$q_{A|B}$. 
In particular, $q_{A|B} = q_{B|A}$.
Let $K[q]$ be the polynomial ring in $N=2^{m-1}$ variables over a field $K$
defined by
$$K[q]=K[q_{A_1|B_1},\ldots, q_{A_N|B_N}],$$
where $\{A_1|B_1, \ldots, A_N|B_N\}$ is the set of all unordered partitions of $V(G)$.
Let $<$ be a
reverse lexicographic order \cite[Example $1.8 (b)$]{HHO} on $K[q]$ that satisfies $q_{A|B} < q_{C|D}$ with $\min \{|A|, |B|\} < \min \{|C|, |D|\}$.
A quadratic Gr\"{o}bner basis of the cut ideal of $K_{2,n-2}$ with respect to $<$ is given by the following proposition.

\begin{Proposition}[{\cite[Theorem~2.3]{Shibata}}]
\label{Shibata}
Let $K_{2,n-2}$ be the complete bipartite graph on the vertex set 
$\{1,2\} \cup \{3,\ldots ,n\}$ for $n \ge 4$. 
Then, a Gr\"{o}bner basis of $I_{K_{2,n-2}}$ with respect to $<$ consists of
\begin{enumerate}
\item
$q_{\{1\} \cup A| \{2\} \cup B} q_{\{1\} \cup B| \{2\} \cup A} -q_{\emptyset|[n]}q_{\{1,2\}|\{3,\ldots ,n\}}$,
\item
$q_{A|B}q_{C|D}-q_{{A\cap C}|{B\cup D}}q_{{A\cup C}|{B\cap D}} \ (1\in A\cap C, 2\in B\cap D, A\not\subset C, C\not\subset A)$,
\item
$q_{A|B}q_{C|D}-q_{{A\cap C}|{B\cup D}}q_{{A\cup C}|{B\cap D}} \ (1,2\in A\cap C, A\not\subset C, C\not\subset A)$.
\end{enumerate}
The initial monomial of each binomial 
is the first monomial.
\end{Proposition}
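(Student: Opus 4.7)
The plan is to establish Proposition \ref{Shibata} in three stages: (i) verify that each listed binomial lies in $I_{K_{2,n-2}}$, (ii) confirm that the first monomial of each binomial is its leading term under the reverse lexicographic order $<$, and (iii) prove the Gr\"obner basis property via Buchberger's criterion. For (i), the type (2) and (3) binomials reflect the general identity
$$\delta_{A|B} + \delta_{C|D} = \delta_{A \cap C \mid B \cup D} + \delta_{A \cup C \mid B \cap D},$$
which holds edge-by-edge for any graph by a short case analysis on the positions of the two endpoints of each edge relative to $A$ and $C$. For the type (1) binomial, a direct computation specific to $K_{2,n-2}$ shows that both $\delta_{\{1\}\cup A|\{2\}\cup B} + \delta_{\{1\}\cup B|\{2\}\cup A}$ and $\delta_{\emptyset|[n]} + \delta_{\{1,2\}|\{3,\ldots,n\}}$ equal the all-ones vector on the $2(n-2)$ edges of $K_{2,n-2}$. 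Step (ii) is a routine application of the revlex definition: the strict containments $A \cap C \subsetneq A, C$ force $q_{A \cap C | B \cup D}$ to have strictly smaller min-size than either of $q_{A|B}, q_{C|D}$, so the right-hand monomials of types (2) and (3) are $<$ the left-hand ones; in type (1), the right-hand side contains $q_{\emptyset|[n]}$, which has min-size $0$ and is therefore the minimum variable in the order.

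The core step is (iii). I would form the $S$-polynomial of each pair of listed binomials whose leading monomials share a variable and show it reduces to zero modulo the full family. A type-by-type case analysis --- handling pairs (1)--(1), (1)--(2), (1)--(3), (2)--(2), (2)--(3), and (3)--(3) --- is required. The guiding combinatorial principle is that iterated use of the meet/join reductions of types (2) and (3) straightens any product of partition-variables into a chain under inclusion on the side containing vertex $1$; within such chains the only remaining reducible configurations are the symmetric pairs $\{1\}\cup A|\{2\}\cup B, \ \{1\}\cup B|\{2\}\cup A$ handled by type (1), which collapse to the extreme partitions $\emptyset|[n]$ and $\{1,2\}|\{3,\ldots,n\}$.

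The main obstacle is organizing the $S$-polynomial reductions, because the three relation types interact: after a type (2) or (3) reduction, a newly produced factor may form a fresh reducible pair with another variable in the monomial, and likewise a type (1) reduction can expose a new meet/join opportunity. Since $<$ is a well-founded monomial order, termination of the rewriting process is automatic, so the task reduces to local confluence. I would carry it out by exhibiting, for each $S$-pair, an explicit reduction path --- typically a short sequence of two or three further applications of types (1)--(3) --- that reaches zero, while verifying that the intermediate symbolic manipulations of the subsets $A, B, C, D$ respect the membership constraints on the vertices $1$ and $2$ imposed in the statement.
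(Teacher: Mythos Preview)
The paper does not prove this proposition; it is quoted verbatim as \cite[Theorem~2.3]{Shibata} and used as a black box. Consequently there is no in-paper argument to compare your proposal against, and any correct proof you supply is necessarily ``different'' from what the paper does.

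On the substance of your outline: steps (i) and (iii) are the right shape, but step (ii) contains a small error. You assert that $q_{A\cap C\mid B\cup D}$ always has strictly smaller min-size than $q_{A\mid B}$ and $q_{C\mid D}$. This is false: take $n=10$, $A=\{1,3,4,5,6,7,8\}$, $B=\{2,9,10\}$, $C=\{1,3,4,5,6,7,9\}$, $D=\{2,8,10\}$ (a type~(2) situation). Then $\min\{|A\cap C|,|B\cup D|\}=\min\{6,4\}=4$, while $\min\{|A|,|B|\}=3$. What \emph{is} true is that \emph{one} of the two right-hand variables has strictly smaller min-size than both left-hand variables: if $m=\min\{|A|,|B|,|C|,|D|\}$ is realised by $|A|$ or $|C|$ then $|A\cap C|<m\le n/2$, so $\min\{|A\cap C|,|B\cup D|\}=|A\cap C|<m$; if $m$ is realised by $|B|$ or $|D|$ then symmetrically $\min\{|A\cup C|,|B\cap D|\}=|B\cap D|<m$. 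Either way the revlex comparison goes through, but the variable witnessing it may be $q_{A\cup C\mid B\cap D}$ rather than $q_{A\cap C\mid B\cup D}$.

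Beyond that correction, be aware that your step (iii) is presently only a plan: you have listed the six families of $S$-pairs and described the intended rewriting strategy, but you have not exhibited any of the reductions. The interaction between type~(1) and types~(2)--(3) is where most of the work lies (a type~(1) rewrite produces the two ``extreme'' variables $q_{\emptyset|[n]}$ and $q_{\{1,2\}|\{3,\ldots,n\}}$, which then have to be absorbed back into a chain), and you should expect several sub-cases when the shared variable in an $S$-pair sits at the top or bottom of an existing chain. If you intend this as a full proof rather than a sketch, those reductions need to be written out.
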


\begin{Example}
Let $K_{2,3}$ be the complete bipartite graph on the vertex set
$\{1,2\} \cup \{3,4,5\}$.
Then, the configuration $\mathcal{A}_{{\rm Cut}(K_{2,3})}$ is 
\begin{equation*}
\mathcal{A}_{{\rm Cut}(K_{2,3})}=
\begin{pmatrix}
0 & 0 & 0 & 0 & 1 & 1 & 1 & 1 & 0 & 0 & 0 & 0 & 1 & 1 & 1 & 1\\
0 & 0 & 0 & 0 & 0 & 0 & 0 & 0 & 1 & 1 & 1 & 1 & 1 & 1 & 1 & 1\\
0 & 0 & 1 & 1 & 1 & 1 & 0 & 0 & 1 & 1 & 0 & 0 & 0 & 0 & 1 & 1\\
0 & 0 & 1 & 1 & 0 & 0 & 1 & 1 & 0 & 0 & 1 & 1 & 0 & 0 & 1 & 1\\
0 & 1 & 0 & 1 & 1 & 0 & 1 & 0 & 1 & 0 & 1 & 0 & 0 & 1 & 0 & 1\\
0 & 1 & 0 & 1 & 0 & 1 & 0 & 1 & 0 & 1 & 0 & 1 & 0 & 1 & 0 & 1\\
1 & 1 & 1 & 1 & 1 & 1 & 1 & 1 & 1 & 1 & 1 & 1 & 1 & 1 & 1 & 1
\end{pmatrix},
\end{equation*}
and a Gr\"{o}bner basis of $I_{K_{2,3}}$ with respect to the monomial order $<$ consists of the following binomials:
\begin{center}
$q_{\{1\}|\{2,3,4,5\}}q_{\{2\}|\{1,3,4,5\}}-q_{\emptyset|\{1,2,3,4,5\}}q_{\{1,2\}|\{3,4,5\}}$,\\
$q_{\{1,3\}|\{2,4,5\}}q_{\{2,3\}|\{1,4,5\}}-q_{\emptyset|\{1,2,3,4,5\}}q_{\{1,2\}|\{3,4,5\}}$,\\
$q_{\{1,4\}|\{2,3,5\}}q_{\{2,4\}|\{1,3,5\}}-q_{\emptyset|\{1,2,3,4,5\}}q_{\{1,2\}|\{3,4,5\}}$,\\
$q_{\{1,5\}|\{2,3,4\}}q_{\{2,5\}|\{1,3,4\}}-q_{\emptyset|\{1,2,3,4,5\}}q_{\{1,2\}|\{3,4,5\}}$,\\
$q_{\{3\}|\{1,2,4,5\}}q_{\{4,5\}|\{1,2,3\}}-q_{\emptyset|\{1,2,3,4,5\}}q_{\{1,2\}|\{3,4,5\}}$,\\
$q_{\{5\}|\{1,2,3,4\}}q_{\{3,4\}|\{1,2,5\}}-q_{\emptyset|\{1,2,3,4,5\}}q_{\{1,2\}|\{3,4,5\}}$,\\
$q_{\{4\}|\{1,2,3,5\}}q_{\{3,5\}|\{1,2,4\}}-q_{\emptyset|\{1,2,3,4,5\}}q_{\{1,2\}|\{3,4,5\}}$,\\
$q_{\{4\}|\{1,2,3,5\}}q_{\{5\}|\{1,2,3,4\}}-q_{\emptyset|\{1,2,3,4,5\}}q_{\{4,5\}|\{1,2,3\}},$\\
$q_{\{3\}|\{1,2,4,5\}}q_{\{5\}|\{1,2,3,4\}}-q_{\emptyset|\{1,2,3,4,5\}}q_{\{3,5\}|\{1,2,4\}},$\\
$q_{\{3\}|\{1,2,4,5\}}q_{\{4\}|\{1,2,3,5\}}-q_{\emptyset|\{1,2,3,4,5\}}q_{\{3,4\}|\{1,2,5\}},$\\
$q_{\{3,5\}|\{1,2,4\}}q_{\{4,5\}|\{1,2,3\}}-q_{\{5\}|\{1,2,3,4\}}q_{\{1,2\}|\{3,4,5\}},$\\
$q_{\{3,4\}|\{1,2,5\}}q_{\{3,5\}|\{1,2,4\}}-q_{\{3\}|\{1,2,4,5\}}q_{\{1,2\}|\{3,4,5\}},$\\
$q_{\{3,4\}|\{1,2,5\}}q_{\{4,5\}|\{1,2,3\}}-q_{\{4\}|\{1,2,3,5\}}q_{\{1,2\}|\{3,4,5\}}$\\
$q_{\{1,4\}|\{2,3,5\}}q_{\{1,5\}|\{2,3,4\}}-q_{\{1\}|\{2,3,4,5\}}q_{\{2,3\}|\{1,4,5\}},$\\
$q_{\{1,3\}|\{2,4,5\}}q_{\{1,5\}|\{2,3,4\}}-q_{\{1\}|\{2,3,4,5\}}q_{\{2,4\}|\{1,3,5\}},$\\
$q_{\{2,3\}|\{1,4,5\}}q_{\{2,4\}|\{1,3,5\}}-q_{\{2\}|\{1,3,4,5\}}q_{\{1,5\}|\{2,3,4\}},$\\
$q_{\{1,3\}|\{2,4,5\}}q_{\{1,4\}|\{2,3,5\}}-q_{\{1\}|\{2,3,4,5\}}q_{\{2,5\}|\{1,3,4\}},$\\
$q_{\{2,3\}|\{1,4,5\}}q_{\{2,5\}|\{1,3,4\}}-q_{\{2\}|\{1,3,4,5\}}q_{\{1,4\}|\{2,3,5\}},$\\
$q_{\{2,4\}|\{1,3,5\}}q_{\{2,5\}|\{1,3,4\}}-q_{\{2\}|\{1,3,4,5\}}q_{\{1,3\}|\{2,4,5\}}$.
\\
\end{center}\vspace{5truept}
The $h^{*}$-polynomial $h^*({\rm Cut}(K_{2,3}),x)$ is
\begin{equation*}
h^*({\rm Cut}(K_{2,3}),x)=x^5+9x^4+26x^3+26x^2+9x+1=(x+1)(x^2+4x+1)^2.
\end{equation*}
\end{Example}

Since the dimension of the cut polytopes of $K_{2,n-2}$ is $2n-4$, the maximum degree of squarefree standard monomials is $2n-3$.
Note that the initial monomial $q_{A|B} q_{C|D}$ of
the binomials in the Gr\"{o}bner basis 
in Proposition~\ref{Shibata} satisfies one of the following conditions:
\begin{itemize}
\item
$1 \in A \cap C$ and $2 \in B \cap D$,
\item
$1 ,2 \in A \cap C$.
\end{itemize}
Hence,
a squarefree monomial
$$q_{A_1|\{1,2\}\cup B_1}\ldots q_{A_k|\{1,2\}\cup B_k} \ 
q_{\{1\}\cup A_{1}'|\{2\}\cup B_{1}'}\ldots q_{\{1\}\cup A_{l}'|\{2\}\cup B_{l}'} \in K[q]$$
is standard if and only if
both 
$q_{A_1|\{1,2\}\cup B_1}\ldots q_{A_k|\{1,2\}\cup B_k}$
and
$q_{\{1\}\cup A_{1}'|\{2\}\cup B_{1}'}\ldots q_{\{1\}\cup A_{l}'|\{2\}\cup B_{l}'}$ are standard.

\begin{Proposition}
\label{standardmonomials}
Each of the squarefree monomials 
\begin{enumerate}
\item[{\rm (1)}]
$q_{A_1|\{1,2\}\cup B_1}\ldots q_{A_k|\{1,2\}\cup B_k}$
\item[{\rm (2)}]
$q_{\{1\}\cup A_{1}'|\{2\}\cup B_{1}'}\ldots q_{\{1\}\cup A_{k}'|\{2\}\cup B_{k}'}$,
\end{enumerate}
of degree $k \leq 2n-3$
is not divisible by the initial monomials if and only if, by changing indices if necessary,
\begin{enumerate}
\item[{\rm (1)}]
$A_1\subsetneq A_2\subsetneq \dots \subsetneq A_k$,
\item[{\rm (2)}]
$A'_1\subsetneq A'_2\subsetneq \dots \subsetneq A'_k \ {\rm and} \ (A'_1, A'_k)\neq (\emptyset ,\{3,\ldots ,n\})$,
\end{enumerate}
respectively.
Moreover, the numbers of squarefree standard monomials of types $(1)$ and $(2)$ above are
\begin{eqnarray}
(k-1)!\genfrac{\{}{\}}{0pt}{}{n-2}{k-1}
+2k!\genfrac{\{}{\}}{0pt}{}{n-2}{k}
+(k+1)!\genfrac{\{}{\}}{0pt}{}{n-2}{k+1},\\
2k!\genfrac{\{}{\}}{0pt}{}{n-2}{k}
+(k+1)!\genfrac{\{}{\}}{0pt}{}{n-2}{k+1},
\end{eqnarray}
respectively, where $\genfrac{\{}{\}}{0pt}{}{n}{k}$ is the Stirling number of the second kind.
\end{Proposition}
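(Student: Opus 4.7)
The plan has two steps: first, characterize standardness by cataloguing which initial monomials from Proposition~\ref{Shibata} can divide a squarefree monomial of type (1) or (2); second, count the resulting chains via a bijection with ordered set partitions.

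For the characterization, the key observation is that each factor in a type (1) monomial places $\{1,2\}$ together on one side of its partition, whereas each factor in a type (2) monomial separates $1$ and $2$. Comparing with the three families in Proposition~\ref{Shibata}, a type (iii) initial monomial can only divide a type (1) product, while a type (i) or type (ii) initial monomial can only divide a type (2) product. For type (1), after rewriting each factor as $q_{\{1,2\}\cup B_i | A_i}$, the condition ``$A\not\subset C$, $C\not\subset A$'' of type (iii) translates to incomparability of the $A_i$'s in $2^{\{3,\ldots,n\}}$; excluding this for all pairs gives the chain condition after reindexing. For type (2), aligning the $1$-side of each factor, the type (ii) condition again reduces to incomparability of the $A_i'$'s, so the chain condition handles that family. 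A type (i) initial monomial $q_{\{1\}\cup A|\{2\}\cup B}q_{\{1\}\cup B|\{2\}\cup A}$ divides the product iff there is a pair $A_i', A_j'$ that are complementary in $\{3,\ldots,n\}$; inside a chain, such a pair exists iff the minimum is $\emptyset$ and the maximum is $\{3,\ldots,n\}$, so standardness for type (2) amounts to the chain condition plus $(A_1',A_k')\neq(\emptyset,\{3,\ldots,n\})$.

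For the count, I would exploit the bijection between a chain $A_1\subsetneq\cdots\subsetneq A_k$ in $2^{\{3,\ldots,n\}}$ and the ordered sequence of blocks
\[
(A_1,\ A_2\setminus A_1,\ \ldots,\ A_k\setminus A_{k-1},\ \{3,\ldots,n\}\setminus A_k).
\]
The interior differences $A_{i+1}\setminus A_i$ are all nonempty, so only the first block (which is empty iff $A_1=\emptyset$) and the last block (which is empty iff $A_k=\{3,\ldots,n\}$) can possibly be empty. Stratifying by which of these two are empty produces four cases, yielding an ordered partition of $\{3,\ldots,n\}$ into $j$ nonempty parts with $j=k+1,\ k,\ k,\ k-1$ respectively. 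Since the number of ordered partitions of $\{3,\ldots,n\}$ into $j$ nonempty parts equals $j!\genfrac{\{}{\}}{0pt}{}{n-2}{j}$, summing the four contributions gives formula~(1). Formula~(2) follows by excluding the case $j=k-1$ (both ends trivial/full), which subtracts $(k-1)!\genfrac{\{}{\}}{0pt}{}{n-2}{k-1}$.

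The only delicate point is the enumeration in the first step: when matching an initial monomial $q_{A|B}q_{C|D}$ against two factors of our product, one must verify that the orientation of each factor (which side of the unordered partition is called $A$) is forced by the side-conditions on $1$ and $2$, so that no case is missed. Once that is checked, both the chain characterization and the Stirling-number count follow by routine manipulation.
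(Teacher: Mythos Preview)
Your proposal is correct and follows essentially the same route as the paper's proof: the paper also observes that the initial monomials of family~(3) in Proposition~\ref{Shibata} interact only with type~(1) products while families~(1) and~(2) interact only with type~(2) products, derives the chain conditions from the incomparability clauses, and handles the extra constraint on $(A_1',A_k')$ by showing that a type~(i) divisor inside a chain forces the endpoints to be $\emptyset$ and $\{3,\ldots,n\}$. For the enumeration the paper likewise splits into the four cases according to whether $A_1=\emptyset$ and whether $A_k=\{3,\ldots,n\}$ and reads off the ordered-set-partition counts $j!\genfrac{\{}{\}}{0pt}{}{n-2}{j}$; your explicit bijection via successive differences is exactly the mechanism behind the paper's Table~\ref{four_pattern}, just spelled out more fully.
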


\begin{proof}
Let $m_1= q_{A_1|\{1,2\}\cup B_1} \, \ldots \,  q_{A_k|\{1,2\}\cup B_k}$
and $m_2 = q_{\{1\}\cup A_{1}'|\{2\}\cup B_{1}'} \, \ldots \, q_{\{1\}\cup A_{k}'|\{2\}\cup B_{k}'}$ be squarefree monomials.

First, suppose that $m_1$ and $m_2 $ are standard.
Since $m_1$ is squarefree and is not divisible by the initial monomials
$q_{A|B}q_{C|D}$ ($1,2\in A\cap C, A\not\subset C, C\not\subset A$),
we can obtain $B_1 \supsetneq \cdots \supsetneq B_{k-1} \supsetneq B_k$
by changing indices if necessary.
Then, $A_1 \subsetneq \cdots \subsetneq A_{k-1} \subsetneq A_k$.
Similarly, since $m_2$ is squarefree and is not divisible by the initial monomials
$q_{A|B}q_{C|D}$ ($1\in A\cap C, 2\in B\cap D, A\not\subset C, C\not\subset A$),
we can obtain $A_1' \subsetneq \cdots \subsetneq A_{k-1}' \subsetneq A_k'$
by changing indices if necessary.
Moreover, since $m_2$ is not divisible by the initial monomial
$q_{\{1\} | \{2,3,\ldots, n\}} q_{\{1,3,\ldots, n\}| \{2\} }$
of the binomial $q_{\{1\} | \{2,3,\ldots, n\}} q_{\{1,3,\ldots, n\}| \{2\} }-q_{\emptyset|[n]}q_{\{1,2\}|\{3,\ldots ,n\}}
$,
we have $(A'_1, A'_k)\neq (\emptyset ,\{3,\ldots ,n\})$.

Contrarily, suppose that
$m_1$
and $m_2$ satisfy
$A_1\subsetneq A_2\subsetneq \dots \subsetneq A_k$,
$A'_1\subsetneq A'_2\subsetneq \dots \subsetneq A'_k \ {\rm and} \ (A'_1, A'_k)\neq (\emptyset ,\{3,\ldots ,n\})$.
Then, $m_1$ and $m_2$ are not divisible by the initial monomials 
$q_{A|B}q_{C|D}$ ($1,2\in A\cap C, A\not\subset C, C\not\subset A$)
and 
$q_{A|B}q_{C|D}$ ($1\in A\cap C, 2\in B\cap D, A\not\subset C, C\not\subset A$).
Hence, $m_1$ is standard.
Suppose that $m_2$ is not standard.
Then, $m_2$ is divisible by the initial monomial 
$q_{\{1\} \cup A| \{2\} \cup B} q_{\{1\} \cup B| \{2\} \cup A}$.
Since $A'_1\subsetneq A'_2\subsetneq \dots \subsetneq A'_k$.
Thus, we may assume that $A \subsetneq B$.
Since $\{1\} \cup A| \{2\} \cup B$ is a partition,
$A \cup B = \{3,\ldots,n\}$.
Therefore, $(A,B)=(\emptyset, \{3,\ldots,n\})$.
This contradicts the hypothesis $(A'_1, A'_k)\neq (\emptyset ,\{3,\ldots ,n\})$.
Hence, $m_2$ is standard.

The Stirling number of the second kind
$\genfrac{\{}{\}}{0pt}{}{a}{b}$
represents the number of ways to partition a set of $a$ objects into $b$ nonempty subsets. We obtain Table~\ref{four_pattern}
by considering four cases
for the number of squarefree standard monomials
of type (1).
\begin{table}[h] 
\begin{center}
\caption{Number of squarefree standard monomials}  
\begin{tabular}{|c|c|c|}
\hline
{} & {$A_1=\emptyset$} & {$A_1\neq \emptyset$}\\ \hline
{$A_k=\{3,\ldots ,n\}$} & 
{$\displaystyle (k-1)! \genfrac{\{}{\}}{0pt}{}{n-2}{k-1}$} & {$\displaystyle k! \genfrac{\{}{\}}{0pt}{}{n-2}{k}$}\\ \hline
{$A_k\neq \{3,\ldots ,n\}$} & {$\displaystyle k! \genfrac{\{}{\}}{0pt}{}{n-2}{k}$} & {$\displaystyle (k+1)!\genfrac{\{}{\}}{0pt}{}{n-2}{k+1}$}\\ \hline 
\end{tabular}
\label{four_pattern}
\end{center}
\end{table}
\vspace{1truept}
There is a restriction that $(A'_1, A'_k)\neq (\emptyset, \{3,\ldots ,n\})$ for type $(2)$.
Therefore, we obtain the desired number of squarefree standard monomials in each condition.
\end{proof}

\section{$h^{*}$-polynomial of the cut polytope of $K_{2,n-2}$}

Let $\Delta$ be a triangulation of a lattice polytope
$\mathcal{P}$. The {\em $f$-polynomial} $f_{\Delta}(x)=\sum_{i=0}^{d+1}f_{i-1}x^i$ of $\Delta$ encodes the number $f_i$ of $i$-faces for $i=0,1\dots, d$ and $f_{-1}=1$.
The {\em $h$-polynomial} $h_{\Delta}(x)=\sum_{i=0}^{d+1}h_{i}x^i$ is given by the following relation \cite[P.185]{Beck}:
$$h_\Delta(x)=\sum_{i=0}^{d+1}f_{i-1}x^i(1-x)^{d+1-i}.$$
The following is known for 
$h^{*}$-polynomials and $h$-polynomials \cite[Theorem $10.3$]{Beck}.

\begin{Proposition}
If $\mathcal{P}$ is a d-dimensional lattice polytope that admits a unimodular triangulation $\Delta$, then
$\displaystyle h^{*}(\mathcal{P},x)=h_{\Delta}(x).$
\end{Proposition}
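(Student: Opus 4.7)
My plan is to argue via a half-open decomposition of $\mathcal{P}$ compatible with $\Delta$, following the classical route of Beck-Robins. Specifically, I would fix an ordering $F_1, \ldots, F_s$ of the maximal simplices of $\Delta$ arising from a generic linear functional (or a line shelling, when one exists). For each $F_i$, define the shelling-type restriction $R(F_i)$ as the minimal face of $F_i$ not contained in $F_1 \cup \cdots \cup F_{i-1}$, and form the half-open simplex $F_i^\circ$ by removing from $F_i$ exactly those facets that do not contain $R(F_i)$; there are $|R(F_i)|$ such facets. A standard verification then shows $\mathcal{P} = \bigsqcup_{i=1}^s F_i^\circ$ as a disjoint union.

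The unimodularity of $\Delta$ enters decisively at the lattice-point count: a unimodular $d$-simplex with $j$ facets removed has Ehrhart series $x^j/(1-x)^{d+1}$, by a direct count identifying lattice points of the $m$-th dilate with non-negative integer tuples of sum $m$ that are strictly positive in the $j$ removed coordinates. Summing the half-open contributions yields
$$\sum_{m \ge 0} i(\mathcal{P}, m)\, x^m \;=\; \frac{\sum_{i=1}^s x^{|R(F_i)|}}{(1-x)^{d+1}},$$
and invoking the shelling interpretation $h_\Delta(x) = \sum_{i=1}^s x^{|R(F_i)|}$ (equivalent to the $f$-to-$h$ transform recorded just before the proposition) identifies the two sides as $h^*(\mathcal{P}, x) = h_\Delta(x)$.

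The chief technical obstacle is verifying the partition $\mathcal{P} = \bigsqcup F_i^\circ$ together with the shelling interpretation of $h_\Delta(x)$; both are classical but rest on a careful analysis of which facets of $F_i$ lie in earlier simplices, and they implicitly require access to a shelling, which may force one to restrict to regular triangulations or to use a generic-functional variant. An alternative route closer to this paper's Gr\"obner-basis toolkit would realize a regular unimodular triangulation as $\Delta(\mathrm{in}_<(I_{\mathcal{A}_\mathcal{P}}))$ for some term order: Gr\"obner flat degeneration identifies $\mathrm{Hilb}(K[\mathcal{A}_\mathcal{P}], x)$, which is the Ehrhart series of $\mathcal{P}$ in the lattice $\ZZ\mathcal{A}_\mathcal{P}$, with $\mathrm{Hilb}(K[\bm{x}]/\mathrm{in}_<(I_{\mathcal{A}_\mathcal{P}}), x)$, and the squarefreeness of the initial ideal identifies the latter with the Hilbert series of the Stanley-Reisner ring $K[\Delta]$, known to equal $h_\Delta(x)/(1-x)^{d+1}$; comparing numerators yields the same conclusion.
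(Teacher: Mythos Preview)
The paper does not supply its own proof of this proposition; it is quoted verbatim as \cite[Theorem~10.3]{Beck} (Beck--Robins) and used as a black box. Your first route---the half-open decomposition via shelling restrictions---is precisely the argument found in that reference, so in that sense your proposal matches the cited source. Your caveat about shellability is well placed: the Beck--Robins argument relies on the triangulation being shellable (line shellings exist for triangulations of convex polytopes), and without that the formula $h_\Delta(x)=\sum_i x^{|R(F_i)|}$ is not even defined.

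Your second route via Gr\"obner flat degeneration is a genuinely different argument and is in fact closer to how the paper actually \emph{uses} the proposition: the only triangulation to which it is applied is $\Delta(\mathrm{in}_<(I_{\mathcal{A}_\mathcal{P}}))$, which is regular by construction, and the identification of the Hilbert series of $K[\mathcal{A}_\mathcal{P}]$ with that of the Stanley--Reisner ring of the initial complex is exactly the mechanism implicitly driving the paper's face count. The shelling approach is more geometric and, once shellability is in hand, covers arbitrary unimodular triangulations; the Gr\"obner approach is confined to regular triangulations but meshes seamlessly with the toric-ideal machinery the paper is already using.
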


We have the following theorem for the $h^{*}$-polynomial of the cut polytope of $K_{2,n-2}$.

\begin{Theorem}
Let  ${\mathcal P} = {\rm Cut}(K_{2,n-2})$ be the cut polytope of $K_{2,n-2}$,
and 
let $\Delta$ be the unimodular triangulation $\Delta({\rm in}_{<}(I_{\mathcal{A}_{\mathcal{P}}}))$ of  ${\mathcal P}$ with respect to the monomial order $<$ in Proposition~\ref{Shibata}. 
Then, the $h^{*}$-polynomial of  ${\mathcal P}$ and the $h$-polynomial of  $\Delta$ are
$$h^{*}({\mathcal P},x) = h_\Delta(x)=(x+1) (A_{n-2}(x))^2,$$
where $A_{n}(x)$ is the Eulerian polynomial of degree $n-1$. In particular, the normalized volume of $\mathcal{P}$ is $h^{*}(\mathcal{P}, 1)=2((n-2)!)^2$.
\end{Theorem}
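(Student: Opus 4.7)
The plan is to exploit the fact that $\Delta$ is unimodular: then the preceding proposition gives $h^*(\mathcal{P},x)=h_\Delta(x)$, and $h_\Delta(x)$ can be read off from the $f$-vector of $\Delta$ via the defining relation $h_\Delta(x)=\sum_{k=0}^{d+1} f_{k-1} x^k (1-x)^{d+1-k}$ with $d+1=2n-3$. By unimodularity, $f_{k-1}$ equals the number of squarefree standard monomials of degree $k$, whose enumeration is supplied by Proposition~\ref{standardmonomials}. So the whole task reduces to assembling these counts into the $h$-polynomial.

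The central structural observation, already highlighted in the paragraph preceding Proposition~\ref{standardmonomials}, is that every variable $q_{A|B}$ is either of type (1) (with $1,2$ in the same block) or of type (2) (with $1,2$ in opposite blocks), and each initial monomial in Proposition~\ref{Shibata} is a product of two variables of the \emph{same} type. Consequently, a squarefree monomial is standard if and only if its type-(1) part and its type-(2) part are each standard, so $f_{k-1}=\sum_{i+j=k} a_i b_j$, where $a_i$ and $b_j$ denote the counts from Proposition~\ref{standardmonomials}. The maximal type-(1) chain in $2^{\{3,\ldots,n\}}$ has length $n-1$, while the excluded pair $(A'_1,A'_k)=(\emptyset,\{3,\ldots,n\})$ forces $b_j=0$ for $j>n-2$; since $(n-1)+(n-2)=d+1$, the $h$-polynomial factors as $h_\Delta(x)=H_1(x)\,H_2(x)$ with
\[
H_1(x)=\sum_{i=0}^{n-1} a_i x^i (1-x)^{n-1-i},\qquad H_2(x)=\sum_{j=0}^{n-2} b_j x^j (1-x)^{n-2-j}.
\]

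It remains to identify each factor. The classical identity $A_m(x)=\sum_{k=1}^{m} k!\genfrac{\{}{\}}{0pt}{}{m}{k} x^{k-1}(1-x)^{m-k}$ allows each of the three Stirling-number terms in $a_k$ (respectively, the two terms in $b_k$) to be rewritten, after shifting the summation index, as a shifted multiple of $A_{n-2}(x)$. A direct computation gives the three contributions to $H_1(x)$ as $x^2 A_{n-2}(x)$, $2x(1-x) A_{n-2}(x)$, and $(1-x)^2 A_{n-2}(x)-(1-x)^{n-1}$; the boundary term $(1-x)^{n-1}$ cancels the $k=0$ summand from $a_0=1$, and the identity $x^2+2x(1-x)+(1-x)^2=1$ collapses the rest to $H_1(x)=A_{n-2}(x)$. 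An analogous but shorter calculation for $H_2$ produces $2x A_{n-2}(x)$ and $(1-x)A_{n-2}(x)-(1-x)^{n-2}$; the boundary cancels and $2x+(1-x)=x+1$ gives $H_2(x)=(x+1)A_{n-2}(x)$. Multiplying yields $h^*(\mathcal{P},x)=(x+1)(A_{n-2}(x))^2$, and the normalized volume then follows from $A_m(1)=m!$, giving $h^*(\mathcal{P},1)=2((n-2)!)^2$.

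The principal obstacle is the last step: one has to perform the reindexing of the three shifted Stirling sums carefully, verify that the small boundary contributions at $k=0$ (coming from $a_0,b_0$) cancel the spurious tails produced by the $(k+1)!$ terms, and then recognize the two telescoping identities $x^2+2x(1-x)+(1-x)^2=1$ and $2x+(1-x)=x+1$ that make the Eulerian factorization emerge. Everything preceding this bookkeeping is a direct invocation of Propositions~\ref{Shibata} and~\ref{standardmonomials} and the definition of the $h$-polynomial.
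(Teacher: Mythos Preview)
Your argument is correct and follows the same overall architecture as the paper's proof: factor the face count as a convolution $f_{k-1}=\sum_{i+j=k}a_ib_j$, split $h_\Delta(x)$ accordingly into two factors, and identify each factor with (a multiple of) $A_{n-2}(x)$ via a Stirling--Eulerian identity.

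There is one genuine, if minor, difference worth recording. The paper first invokes the Gorenstein/palindromic property of $h_\Delta$ (citing Ohsugi) to rewrite $h_\Delta(x)=x^{2n-5}h_\Delta(x^{-1})=x^{-2}\sum_k f_{k-1}(x-1)^{2n-3-k}$, and then applies the identity $A_m(x)=\sum_{k}k!\genfrac{\{}{\}}{0pt}{}{m}{k}(x-1)^{m-k}$. You instead work directly with $h_\Delta(x)=\sum_k f_{k-1}x^{k}(1-x)^{2n-3-k}$ and use the companion identity $A_m(x)=\sum_k k!\genfrac{\{}{\}}{0pt}{}{m}{k}x^{k-1}(1-x)^{m-k}$. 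Your route is slightly more self-contained, since it does not need palindromicity as an input; palindromicity instead falls out as a consequence. A small remark: your ``boundary term'' bookkeeping is unnecessary---once the three sums are reindexed they equal $x^2A_{n-2}(x)$, $2x(1-x)A_{n-2}(x)$, and $(1-x)^2A_{n-2}(x)$ on the nose (and similarly for $H_2$), so nothing has to be cancelled.
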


\begin{proof} 
The Eulerian polynomial $A_{n}(x)$ satisfies
the following condition \cite[Theorem~9.1]{Petersen}:
$$A_{n}(x)=\sum_{k=1}^{n}k! \genfrac{\{}{\}}{0pt}{}{n}{k}
(x-1)^{n-k}.$$
In addition, $\genfrac{\{}{\}}{0pt}{}{n}{k}$ is given by the following equation \cite{Stanley3}:
$$\genfrac{\{}{\}}{0pt}{}{n}{k}=\sum_{m=1}^{k}(-1)^{k-m}m^{n}\binom{k}{m}.$$
The $f$-polynomial of $\Delta$ is given by the following equation:
$$f_\Delta(x)=\sum_{k=0}^{2n-3}f_{k-1}x^{k}.$$
Then, $f_{k-1}$ is equal to the number of squarefree standard monomials of degree $k$.
Recall that
a squarefree monomial
$$q_{A_1|\{1,2\}\cup B_1}\ldots q_{A_\alpha|\{1,2\}\cup B_\alpha} \ 
q_{\{1\}\cup A_{\alpha+1}'|\{2\}\cup B_{\alpha+1}'}\ldots q_{\{1\}\cup A_{k}'|\{2\}\cup B_{k}'} \in K[q]$$
is standard if and only if
$q_{A_1|\{1,2\}\cup B_1}\ldots q_{A_\alpha|\{1,2\}\cup B_\alpha}$
and
$q_{\{1\}\cup A_{\alpha+1}'|\{2\}\cup B_{\alpha+1}'}\ldots q_{\{1\}\cup A_{k}'|\{2\}\cup B_{k}'}$ are standard.
From Proposition~\ref{standardmonomials},
we have $f_{k-1} = \sum_{\alpha=0}^k B_\alpha C_{k-\alpha}$,
where
\begin{eqnarray*}
B_k &=& (k-1)!\genfrac{\{}{\}}{0pt}{}{n-2}{k-1}
+2k!\genfrac{\{}{\}}{0pt}{}{n-2}{k}
+(k+1)!\genfrac{\{}{\}}{0pt}{}{n-2}{k+1},\\
C_k &=& 2k!\genfrac{\{}{\}}{0pt}{}{n-2}{k}
+(k+1)!\genfrac{\{}{\}}{0pt}{}{n-2}{k+1}.
\end{eqnarray*}
Since $B_k = 0$ for any $k \ge n$, and $C_k = 0$ for any $k \ge n-1$,
$$f_\Delta(x)=\sum_{k=0}^{2n-3} \sum_{\alpha=0}^k B_\alpha C_{k-\alpha} x^{k}
=\left(\sum_{k=0}^{n-1}B_{k}x^{k}\right)
\left(\sum_{k=0}^{n-2}C_{k}x^{k}\right).$$
Let $X = x-1$. From \cite[Remark 2.4, Theorem 3.4]{OHSUGI}, $h_{\Delta}(x)$ is a palindromic polynomial of degree $2n-5$. Hence, 
\begin{align}
h_{\Delta}(x)&=x^{2n-5}h_{\Delta}(x^{-1})=x^{2n-5}{\displaystyle \sum_{k=0}^{2n-3}f_{k-1}x^{-k}(1-x^{-1})^{2n-3-k}}=x^{-2}{\displaystyle \sum_{k=0}^{2n-3}f_{k-1}X^{2n-3-k}} \notag \\
&=x^{-2}\left(\sum_{k=0}^{n-1}B_{k}X^{n-k-1}\right)\left(\sum_{k=0}^{n-2}C_{k}X^{n-k-2}\right).\notag
\end{align}
It then follows that
\begin{align}
\sum_{k=0}^{n-1}B_{k}X^{n-k-1}
&=\sum_{k=1}^{n}B_{k-1}X^{(n-1)-(k-1)} \notag \\
&=\sum_{k=3}^{n}(k-2)!\genfrac{\{}{\}}{0pt}{}{n-2}{k-2}X^{(n-2)-(k-2)}\notag \\
& \ \ \ \ \ \ +2X\sum_{k=2}^{n-1}(k-1)!\genfrac{\{}{\}}{0pt}{}{n-2}{k-1}X^{(n-2)-(k-1)}+X^2\sum_{k=1}^{n-2}k!\genfrac{\{}{\}}{0pt}{}{n-2}{k}X^{(n-2)-k} \notag\\
&=\sum_{k'=1}^{n-2}k'!
\genfrac{\{}{\}}{0pt}{}{n-2}{k'}X^{(n-2)-k'}\notag \\
& \ \ \ \ \ \ +2X\sum_{k''=1}^{n-2}k''!\genfrac{\{}{\}}{0pt}{}{n-2}{k''}X^{(n-2)-k''} +X^2\sum_{k=1}^{n-2}k!\genfrac{\{}{\}}{0pt}{}{n-2}{k}X^{(n-2)-k}\notag \\
&=A_{n-2}(x)+2XA_{n-2}(x)+X^2A_{n-2}(x)\notag \\
&=x^2A_{n-2}(x),\notag\\
\sum_{k=0}^{n-2}C_{k}X^{n-k-2} &=\sum_{k=0}^{n-2}
\left( 2k!\genfrac{\{}{\}}{0pt}{}{n-2}{k}X^{n-k-2}+(k+1)!\genfrac{\{}{\}}{0pt}{}{n-2}{k+1}X^{n-k-2} \right) \notag \\
&=2\sum_{k=1}^{n-2}k!\genfrac{\{}{\}}{0pt}{}{n-2}{k}X^{(n-2)-k}+X\sum_{k=0}^{n-3}(k+1)!\genfrac{\{}{\}}{0pt}{}{n-2}{k+1}X^{(n-3)-k}\notag \\
&=2A_{n-2}(x)+X\sum_{k'=1}^{n-2}k'!\genfrac{\{}{\}}{0pt}{}{n-2}{k'}X^{(n-2)-k'}\notag \\
&=2A_{n-2}(x)+X A_{n-2}(x)\notag \\
&=(1+x)A_{n-2}(x).\notag 
\end{align}
Therefore, $h^{*}({\mathcal P},x)=h_\Delta (x)=(x+1)(A_{n-2}(x))^2$.
\end{proof}

\section{Acknowledgment}
The author is grateful to the anonymous referees for their careful reading and useful comments.

\bigskip
\hrule
\bigskip

\noindent 2010 \textit{Mathematics Subject Classification}:
Primary 05A15; Secondary 13P10, 52B20.

\noindent \textit{Keywords}:
cut polytope, complete bipartite graph, $h^*$-polynomial,  Gr\"{o}bner basis

\bigskip
\hrule
\bigskip

\end{document}